\documentclass[11pt,oneside,reqno]{amsart}

\usepackage{graphicx}
\usepackage{tikz-cd}
\usepackage{color}
\usepackage[normalem]{ulem}
\usepackage{mathtools}
\usepackage{amsfonts}
\usepackage{amscd,amsmath}
\usepackage{amssymb}
\usepackage{setspace}
\usepackage{blindtext}
\usepackage{graphicx}
\usepackage{xfrac}
\usepackage{amsmath}

\usepackage{gastex}
\usepackage{longtable}
\usepackage{lscape}
\usepackage{tabularx}
\usepackage{multicol}
\usepackage{verbatim}
\usepackage{multirow}
\usepackage{epsfig,graphics,graphicx}
\usepackage{geometry}
\usepackage{amscd}
\usepackage{xcolor}
\usepackage{hyperref}
\usepackage[utf8]{inputenc}

\usepackage{amssymb,amsmath,amsthm,verbatim,calc}
\numberwithin{equation}{section}
\usepackage[english]{babel}

\definecolor{armygreen}{rgb}{0.29, 0.33, 0.13}
\definecolor{darkgreen}{rgb}{0.0, 0.2, 0.13}

\newtheorem{thm}{Theorem}[section]
\newtheorem{lem}[thm]{Lemma}
\newtheorem{cor}[thm]{Corollary}
\newtheorem{ex}[thm]{Example}

\newtheorem{Def}[thm]{Definition}
\newtheorem{prop}[thm]{Proposition}
\newtheorem{rem}[thm]{Remark}

\newcommand{\Rem}{\begin{rem} \rm}
\newcommand{\bdfn}{\begin{Def} \rm}
\newcommand{\edfn}{\end{Def}}

\newcommand{\ba}{\begin{array}}
\newcommand{\ea}{\end{array}}

\newtheorem{innercustomthm}{Theorem}
\newenvironment{customthm}[1]
{\renewcommand\theinnercustomthm{#1}\innercustomthm}
{\endinnercustomthm}

\begin{document}
\title{On Nonlinear Idempotents and Metric Projections in Normed Spaces}

\author{Himanshu Kumar}
\address{Department of Applied Sciences, Indian Institute of Information Technology Allahabad, Prayagraj-211015, U.P., India.}
\email{himanshumath1507@gmail.com}
        
\author{Abdullah Bin Abu Baker}
\address{Department of Applied Sciences, Indian Institute of Information Technology Allahabad, Prayagraj-211015, U.P., India.}
\email{abdullahmath@gmail.com}

\author{Fernanda Botelho}
\address{Department of Mathematical Sciences, The University of Memphis, Memphis, TN 38152, USA.}
\email{mbotelho@memphis.edu}

\subjclass[2020]{46B04, 46T20, 47J05}
		
\keywords{Projections, idempotents, generalized $k$-circular idempotents and bi-potents, metric projections}

\thanks{The first-named author gratefully acknowledges the Ministry of Education, New Delhi (India), for financial support, and IIIT Allahabad, India, for providing the resources and infrastructure to carry out this research. The second-named author is partially supported by Anusandhan National Research Foundation (MATRICS) grant No. MTR/2022/000710.}
		
\date{\today}
		
\begin{abstract}
Let $X$ be a normed linear space. A map $P: X \rightarrow X$ is called idempotent if $P^2 = P$. In this paper, we introduce some new classes of idempotent maps (linearity is not assumed) on $X$ and study their properties. A well-known example of an idempotent map is the metric projection $P_K$ onto a Chebyshev subset $K$ of $X$. We provide some necessary conditions for the map $I- P_K$ to be idempotent, where $I$ denotes the identity operator on $X$. 
\end{abstract}
		
\maketitle
		
\thispagestyle{empty}

\section{Introduction and Statement of  Main Results}

Let $X$ be a complex normed linear space. A map $P: X \rightarrow X$ is called idempotent if $P^2 = P$. A bounded (continuous) and idempotent linear operator on $X$ will be referred as projection. Throughout this paper, we follow this terminology, and no linearity is assumed when we consider idempotent maps. Many (simple) properties of projections are not satisfied by idempotents due to the lack of linearity structure in the later class. To illustrate this, we observe that if $P$ is an idempotent map on $X$, then $I-P$ may not be idempotent, where $I$ denotes the identity operator on $X$. A simple example is to define $P: X \rightarrow X$ as $P(x) = \frac{x}{\|x\|}$ when $x \neq 0$ and $P(0) = 0$, see \cite{HAF}. In addition, the absence of linearity makes it more difficult to perform basic algebraic computations on idempotent maps. If $P$, $Q$, and $R$ are idempotent maps on $X$, then $(P + Q)R = PR + QR$, but $P(Q + R) = PQ + PR$ is not necessarily true. Indeed, for fixed nonzero vectors $a, b \in X$, we define $P, Q, R: X \rightarrow X$ as $P(x) = a$ when $x \neq 0$ and $P(0) = 0$, $Q(x) = b$, $R(x) = -b$. Then $P(Q + R) = 0$ and $(PQ + PR)(x) = 2a$. Hence, new techniques are needed to study idempotent maps, and we develop some of them in this paper. A well-known class of idempotent maps is that of metric projections, which we will recall and discuss in detail.

We consider the following new classes of idempotent maps on a normed space $X$. Among these classes are those that are intertwined with surjective (not necessarily linear) isometries supported by the underlying space. Isometries on normed spaces are distance-preserving maps, that is, a map $T: X \rightarrow X$ is called an isometry if $\| Tx - Ty \| = \| x - y \|$ for all $x, y \in X$. 

\begin{Def}
Let $X$ be a normed space. Let $\mathcal{C} = \{P_1, P_2, \ldots, P_k\}$, $k \geq 2$, be a finite collection of nonzero distinct idempotent maps on $X$. We say that $\mathcal{C}$ defines a {\bf \em partition of identity by idempotents} if $\sum_{i=1}^k P_i = I$.
\end{Def}

\begin{Def} Let $\mathcal{C} = \{P_1, P_2, \ldots, P_k\}$, $k \geq 2$, be a finite collection of nonzero distinct idempotent maps on $X$ which defines a partition of the identity by idempotents. Then $\mathcal{C}$ 
\begin{enumerate}
\item is said to be a {\bf \em commuting partition of the identity by idempotents} if $P_iP_j = P_jP_i$ for $i \neq j$, $i, j = 1, 2, \ldots, k$.
		
\item is said to be an {\bf \em orthogonal partition of the identity by idempotents} if $P_iP_j = 0$ for $i \neq j$, $i, j = 1, 2, \ldots, k$.
		
\item defines an {\bf \em idempotent resolution of the identity} if each $P_i$ is bi-potent, $i = 1, 2, \ldots, k$.
\end{enumerate}
\end{Def}

We recall that a map $P$ on a normed space $X$ is said to be bi-potent if both $P$ and $I-P$ are idempotent maps \cite{HAF}. We also recall 

\begin{lem} \cite[Lemma 1.3]{HAF}
Let $P$ be a map on a normed space $X$. If $P$ is idempotent, then $(I - P)P = 0$. Further, $P$ is bi-potent if and only if $P(I-P) = 0$.
\end{lem}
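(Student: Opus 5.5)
The plan is a direct computation from the definitions. The only algebraic fact needed is the right distributivity already noted in the introduction: for arbitrary maps $A, B, C$ on $X$ we have $(A+B)C = AC + BC$, because $((A+B)C)(x) = A(Cx) + B(Cx)$. Left distributivity $A(B+C)=AB+AC$ fails without linearity, so every expansion must keep the composed-on-the-right factor intact.

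For the first assertion, suppose $P^2=P$. By right distributivity, for every $x\in X$,
\[
((I-P)P)(x) = P(x) - P(P(x)) = P(x) - P^2(x) = 0 .
\]
Hence $(I-P)P=0$.

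For the second assertion, we write
\[
(I-P)^2 = (I-P)(I-P) = I(I-P) - P(I-P) = (I-P) - P(I-P),
\]
again using only right distributivity, applied with $C = I-P$. So $(I-P)^2 = I-P$ holds if and only if $P(I-P)=0$.
\begin{itemize}
\item If $P$ is bi-potent, then $I-P$ is idempotent, and the identity above gives $P(I-P)=0$.
\item Conversely, if $P(I-P)=0$, the identity gives $(I-P)^2 = I-P$. Together with the standing hypothesis that $P$ is idempotent, this shows $P$ is bi-potent.
\end{itemize}

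The only point requiring care is this last expansion. One must not write $(I-P)(I-P) = I - P - P + P^2$, since that would use left distributivity of $P$, which is unavailable. Grouping the expansion as $(I-P)\circ(I-P)$ with the right-hand factor kept whole avoids the issue entirely.
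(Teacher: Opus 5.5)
Your proof is correct. The paper does not prove this lemma itself (it quotes it as Lemma 1.3 of [HAF]), and your pointwise identity $(I-P)^2=(I-P)-P(I-P)$, which uses only right distributivity, is exactly the direct verification the statement calls for. You are also right to use the standing hypothesis $P^2=P$ in the converse: without it, $P(I-P)=0$ does not give bi-potency, e.g.\ $P=I-Q$ with $Q(x)=x/\|x\|$, $Q(0)=0$, satisfies $P(I-P)=0$ but $P^2\neq P$.
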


\begin{ex}
	
\begin{enumerate}
\item Let $C[0,1]$ be the space of all real-valued continuous functions on $[0,1]$ with the sup norm $\|.\|_\infty$. Define $P_1, P_2: C[0,1] \rightarrow C[0,1]$ by $P_1f(x) = \max\{f(x),0\}$ and $P_2f(x) = \min\{f(x),0\}$. Then $\mathcal{C} = \{P_1, P_2\}$ is an orthogonal partition of the identity by idempotents. The collection $\mathcal{C}$ also defines an idempotent resolution of the identity.
		
\item Let $E_1, E_2, \ldots, E_k$ be pairwise disjoint subsets of a normed spaces $X$ such that $ X=\bigcup_{i=1}^{k} E_{i}$. For $i = 1,2,\ldots,k$, define $P_i: X \rightarrow X$ by $P_i = I \cdot \chi_{E_i}$, where $\chi_{E_i}$ is the characteristic function of the set $E_i$. Then $\mathcal{C} = \{P_1, P_2, \ldots, P_k\}$ is an orthogonal partition which defines an idempotent resolution of the identity. 
\end{enumerate}

\end{ex}

It is easy to see that if $\mathcal{C} = \{P_1, P_2, \ldots, P_k\}$ is an orthogonal partition of the identity by idempotents, then it is commuting. Our first result shows that the converse holds true.

\begin{customthm}{A} \label{CO}
Let $\mathcal{C} = \{P_1, P_2, \ldots, P_k\}$, $k \geq 2$, be a finite collection of nonzero distinct idempotent maps on a normed space $X$ which defines a partition of the identity by idempotents. If $\mathcal{C}$ is a commuting partition of the identity by idempotents, then it is orthogonal.
\end{customthm}

Let $\mathbb{T}$ denote the unit circle in the complex plane. 

\begin{Def} \label{kCP} Let $X$ be a complex normed space. Let $\mathcal{C} = \{P_1, P_2, \ldots, P_k\}$, $k \geq 2$, be an orthogonal partition which defines an idempotent resolution of the identity. Then $\mathcal{C}$
	
\begin{enumerate}
\item is said to be  a family of {\bf \em \boldmath $k$-circular bi-potents} ($kCB$, for short), if $\sum_{i=1}^k \lambda_i P_i$ is a  surjective isometry on $X$ for all $\lambda_i \in \mathbb{T}$.
		
\item is said to be  a family of {\bf \em generalized \boldmath $k$-circular bi-potents} ($GkCB$, for short), corresponding to a surjective isometry $T$ on $X$ if there exist distinct $\lambda_1, \lambda_2, \ldots, \lambda_k \in \mathbb{T}$ such that $T = \sum_{i=1}^k \lambda_i P_i$.
\end{enumerate}	

Each $P_i$ in $(1)$ and $(2)$ is called a $k$-circular bi-potent and a generalized $k$-circular bi-potent, respectively.
\end{Def}

\begin{Def} \label{kCI} Let $X$ be a complex normed space. Let $\mathcal{C} = \{P_1, P_2, \ldots, P_k\}$, $k \geq 2$, be an orthogonal partition of the identity by idempotents. Then $\mathcal{C}$
	
\begin{enumerate}
\item is said to be  a family of {\bf \em \boldmath $k$-circular idempotents} ($kCI$, for short), if $\sum_{i=1}^k \lambda_i P_i$ is a  surjective isometry on $X$ for all $\lambda_i \in \mathbb{T}$.
		
\item is said to be a family of {\bf \em generalized \boldmath $k$-circular idempotents} ($GkCI$, for short), corresponding to a surjective isometry $T$ on $X$ if there exist distinct $\lambda_1$, $\lambda_2, \ldots, \lambda_k \in \mathbb{T}$ such that $T = \sum_{i=1}^k \lambda_i P_i$.
\end{enumerate}	

Each $P_i$ in $(1)$ and $(2)$ is called a $k$-circular idempotent and a generalized $k$-circular idempotent, respectively. We also say that $\mathcal{C}$ is a family of (generalized) $k$-circular idempotents corresponding to the isometry $T$. We sometimes refer to $T$ as the isometry associated with the family $\mathcal{C}$. 
\end{Def}

Our second result is the following. 

\begin{customthm}{B} \label{P_icont}
Let $\mathcal{C} = \{ P_1, P_2, \ldots, P_k\}$, $k \geq 2$, be a family of $k$-circular idempotents on a normed space $X$. Then each $P_i$ is continuous, $1 \leq i \leq k$.
\end{customthm}

\begin{rem}
	
\begin{enumerate}
\item It is clear that (generalized) $k$-circular bi-potents are (generalized) $k$-circular idempotents. Moreover, for $k = 2$, these two classes coincide, that is, $P$ is a (generalized) $2$-circular bi-potent if and only if it is a (generalized) $2$-circular idempotent. Indeed, if $P_1$ and $P_2$ are idempotent maps such that $P_1 + P_2 = I$, then they are bi-potents.
		
\item $2$-circular idempotents and generalized $2$-circular idempotents will be referred as bi-circular idempotents ($bCI$, for short) and generalized bi-circular idempotents ($GbCI$, for short), respectively.
\end{enumerate}

\end{rem}

In the linear case, there is no distinction between Definitions \ref{kCP} and \ref{kCI}, since every projection is bi-potent. The linear version of the above classes of maps is known as $k$-circular projections and generalized $k$-circular projections; see \cite{AS, A, JI, MH1, DI} and \cite{DCE}. In the last two decades, these classes of projections, especially for the case $k = 2$, have been studied extensively for various Banach spaces. We refer interested readers to the papers \cite{FJ, MDC, JJ, LB1, LB2} and the references therein. 
 
In \cite{FT, FT2}, Botelho and Miura described $GbCI$ on the space of continuously differentiable complex-valued functions on $[0,1]$. The authors of this paper in \cite{HAF} characterized $GbCI$ on the space $\mathcal{S}_A$ of analytic functions on the open unit disc $\mathbb{D}$ whose derivative can be extended to the closed unit disc $\overline{\mathbb{D}}$, and the space $\mathcal{S}^\infty$ of analytic functions on $\mathbb{D}$ with bounded derivatives.	
	
\section{Proofs of Main Results}

First we present the proof of Theorem \ref{CO}. 

\begin{proof}[Proof of Theorem \ref{CO}]
We first note that for $k = 2$, we have $P_1 + P_2 = I$ and $P_1 + P_2P_1 = P_1$. Hence $P_1P_2 = P_2P_1 = 0$.

Let $\mathcal{C} = \{P_1, P_2, \ldots, P_k\}$, $k >2$, be a commuting partition. We will prove that

\begin{equation} \label{orth}
\prod_{i =1}^n P_{\alpha_i} = 0;\ n = 2, 3, \ldots, k, 
\end{equation}
where $\alpha_i \in \{1,2, \ldots, k\}$, and are distinct. 

Since $\mathcal{C}$ defines a partition of the identity, we have
\begin{equation} \label{resol}
\sum_{i=1}^k P_i = I.
\end{equation}

First, we prove Equation \eqref{orth} for $n = k$. 

Multiplying Equation \eqref{resol} by $P_1 P_2 \cdots P_k$, from the commutativity assumption, we get

\begin{align} \label{n=k} 
& \sum_{i=1}^k P_i P_1 P_2 \cdots P_k  = P_1 P_2 \cdots P_k, \nonumber \\ 
& \Longrightarrow \sum_{i=1}^k  P_1 P_2 \cdots P_k  = P_1 P_2 \cdots P_k, \nonumber \\
&\Longrightarrow (k - 1) P_1 P_2 \cdots P_k  = 0, \nonumber \\
&\Longrightarrow \prod_{i =1}^k P_i  = 0, \ \ \ \mbox{(since} \ k\neq 1).
\end{align} 

For the case $n = k - 1$, we assume without loss of generality that $\alpha_i \neq k$, for all $i = 1, \ldots, k - 1$. That is, we need to show  

\begin{equation*}
\prod_{i =1}^{k - 1} P_i = 0.
\end{equation*}

At this point, we multiply Equation \eqref{resol} by $P_1 P_2 \cdots P_{k - 1}$ to get

\begin{equation*}
\sum_{i=1}^k P_i P_1 P_2 \cdots P_{k-1}  = P_1 P_2 \cdots P_{k-1}.    
\end{equation*}

This implies that

\begin{equation*}
(k - 1) \prod_{i =1}^{k - 1} P_i + \prod_{i =1}^k P_i = \prod_{i =1}^{k - 1} P_i.
\end{equation*}
Using Equation \eqref{n=k}, we conclude
\begin{equation*}
\prod_{i =1}^{k - 1} P_i = 0, \quad (\mbox{since} \ k \neq 2).
\end{equation*}

Continuing this process, suppose that we have proved Equation \eqref{orth} for $n = m + 1$. To prove the result for $n = m$, we use the following steps.

{\bf \boldmath Steps to prove Equation \eqref{orth} for $n = m$, $2 \leq m \leq k - 2$}.
\begin{enumerate}
\item Multiplying Equation \eqref{resol} by $P_{\alpha_1} P_{\alpha_2} \cdots P_{\alpha_m}$, we get the following equation

\begin{equation*}
\sum_{i=1}^k P_i P_{\alpha_1} P_{\alpha_2} \cdots P_{\alpha_m} = P_{\alpha_1} P_{\alpha_2} \cdots P_{\alpha_m}.    
\end{equation*}

\item Separating those terms in the above equation in which $i = \alpha_j$, $j= 1, \ldots, m$, from others, we have 
\begin{equation*}
\sum_{i = \alpha_1, \ldots, \alpha_m} P_i P_{\alpha_1} P_{\alpha_2} \cdots P_{\alpha_m} + \sum_{i \neq \alpha_1, \ldots, \alpha_m} P_i P_{\alpha_1} P_{\alpha_2} \cdots P_{\alpha_m} = P_{\alpha_1} P_{\alpha_2} \cdots P_{\alpha_m}. 
\end{equation*}

\item Using commutativity of $P_i$s, we conclude 

\begin{equation*}
m \ P_{\alpha_1} P_{\alpha_2} \cdots P_{\alpha_m} + \sum_{i \neq \alpha_1, \ldots, \alpha_m} P_i P_{\alpha_1} P_{\alpha_2} \cdots P_{\alpha_m} = P_{\alpha_1} P_{\alpha_2} \cdots P_{\alpha_m}. 
\end{equation*}

\item Since the product of an arbitrary $(m+1)$-idempotents from $\mathcal{C}$ is $0$, the second term of the above equation vanishes. It follows that 

\begin{equation*}
(m-1) \prod_{i =1}^m P_{\alpha_i} = 0, \text{ or } \prod_{i =1}^m P_{\alpha_i} = 0.
\end{equation*}
\end{enumerate}

This completes the proof of Theorem \ref{CO}.
\end{proof}

Before proceeding to the proof of Theorem \ref{P_icont}, we mention some properties of $GkCI$ which generalize some of the results proved for $GbCI$ in \cite{HAF}.
   
\begin{lem} \label{TP=lambdaP}
Let $\mathcal{C} = \{P_1, P_2, \ldots, P_k\}$, $k \geq 2$, be a family of generalized $k$-circular idempotents corresponding to a surjective isometry $T$ such that $T = \sum_{i=1}^k \lambda_i P_i$. Then $TP_i = \lambda_i P_i$ for all $i = 1, 2, \ldots, k$.
\end{lem}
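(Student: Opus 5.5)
The plan is to compute $TP_i$ directly from the representation $T=\sum_{j=1}^k\lambda_j P_j$, and to do so without any linearity of the $P_j$. The key observation is that the sum of maps is defined pointwise: $(A+B)R = AR + BR$ holds for arbitrary maps $A,B,R$ on $X$, as noted in the introduction. Thus right composition distributes over sums, and scalar multiples of maps compose in the same pointwise way, $(\lambda A)R=\lambda(AR)$. This is the only structural fact we need, so we avoid any expression of the form $P(Q+R)$, which would be problematic.

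Fix $i\in\{1,\dots,k\}$ and $x\in X$. Then
\begin{equation*}
TP_i x=\Big(\sum_{j=1}^k \lambda_j P_j\Big)(P_i x)=\sum_{j=1}^k \lambda_j P_j(P_i x)=\sum_{j=1}^k\lambda_j (P_jP_i)x .
\end{equation*}
Since $\mathcal{C}$ is a family of generalized $k$-circular idempotents, by Definition \ref{kCI} it is in particular an orthogonal partition of the identity by idempotents. Hence $P_jP_i=0$ for $j\neq i$, and $P_iP_i=P_i$ by idempotency. Every term with $j\ne i$ therefore vanishes, and the remaining term is $\lambda_iP_ix$. As $x$ was arbitrary, $TP_i=\lambda_iP_i$.

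The only point needing care, and it is mild rather than a real obstacle, is to apply the composition with $T$ on the left of $P_i$. That is, we evaluate $T$ at the point $P_ix$ instead of pulling $P_i$ inside a sum. Written this way, each step is pointwise evaluation, and the orthogonality and idempotency relations $P_jP_i=0$ for $j\ne i$ and $P_i^2=P_i$ finish the argument. Neither the isometry property of $T$ nor the distinctness of the $\lambda_j$ is needed.
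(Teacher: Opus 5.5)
Your proposal is correct and is essentially the paper's proof: the paper composes $T=\sum_{j=1}^k\lambda_jP_j$ on the right with $P_i$ and uses orthogonality (together with $P_i^2=P_i$), just as you do. Your pointwise write-up also makes explicit that only right distributivity is used, which the paper leaves implicit.
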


\begin{proof}
Since $T = \sum_{i=1}^k \lambda_i P_i$, multiply this equation by $P_i$ and use the orthogonality of $P_i$s to get the desired result. 
\end{proof} 

Consider a family $\mathcal{C} = \{P_1, P_2, \ldots, P_k\}$ of $GkCI$ corresponding to a surjective isometry $T$. Mazur-Ulam theorem states that some translation of $T$ is a real linear isometry. In other words, $T = T(0) + S$, where $S$ is a real linear isometry on $X$. The next proposition shows that $T$ maps the origin to itself.

\begin{prop} \label{T0=0}
Let $\mathcal{C} = \{P_1, P_2, \ldots, P_k\}$, $k \geq 2$, be a family of generalized $k$-circular idempotents corresponding to a surjective isometry $T$. Then $T(0) = 0$. Moreover, $T$ is real linear.
\end{prop}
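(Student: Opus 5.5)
Our plan is to first show that each $P_i$ fixes the origin, and then read off $T(0)=0$ from the representation $T=\sum_{i=1}^k \lambda_i P_i$. Real linearity will then follow from the Mazur--Ulam theorem.

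Set $a_i = P_i(0)$ for $i=1,\ldots,k$. Because $\sum_{i} P_i = I$, we have $\sum_i a_i = 0$. Orthogonality gives $P_jP_i=0$ for $j\neq i$, so $P_j(a_i)=0$ whenever $j\ne i$. Idempotency gives $P_i(a_i)=P_i(P_i(0))=P_i(0)=a_i$. Thus each $a_i$ lies in the range of $P_i$ and is annihilated by every other $P_j$.

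Next we evaluate $T$ at the $a_i$. By Lemma \ref{TP=lambdaP}, $T(a_i)=TP_i(0)=\lambda_iP_i(0)=\lambda_i a_i$, and also $T(0)=\sum_i\lambda_i a_i$. Now apply the isometry property to the pair $a_i$ and $0$:
\[
\|\lambda_i a_i - T(0)\| = \|T(a_i)-T(0)\| = \|a_i\|.
\]
This alone does not force $a_i=0$, so we bring in more structure. The map $P_j$ is constant equal to $a_j$ on the set $\{x: P_jx=\ldots\}$, which is not directly usable. It is cleaner to apply $P_j$ to the identity $x=\sum_iP_ix$ and exploit the vanishing products. For instance, $P_1(0)=P_1(P_2(0))$ when $k\ge 2$? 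No: $P_1P_2=0$ means $P_1(P_2(0))=0$, not $P_1(0)$. So the key computation is to find a vector $x$ with $P_i x=0$ for all $i$, since then $x=\sum_iP_ix=0$, and from that deduce $a_i=0$.

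We therefore look at $x=a_1$. We have $P_j(a_1)=0$ for $j\neq 1$ and $P_1(a_1)=a_1$, so $a_1=\sum_iP_i(a_1)=a_1$, which is consistent and gives nothing new. The decisive tool must instead be the isometry $T$ combined with distinctness of the $\lambda_i$. Consider $y = T^{-1}(0)$. By Lemma \ref{TP=lambdaP}, $T(P_i y)=\lambda_iP_iy$. Writing $z_i=P_iy$, surjectivity and $\|T(z_i)-T(y)\|=\|z_i-y\|$ give $\|\lambda_i z_i\|=\|z_i-y\|$.

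The cleanest route we expect is to compare $T(0)$ with $T(a_i)$ across different $i$. Use $\|T(a_i)-T(a_j)\|=\|a_i-a_j\|$, that is, $\|\lambda_ia_i-\lambda_ja_j\|=\|a_i-a_j\|$. Use also $\|T(a_i)-T(0)\|=\|a_i\|$, that is,
\[
\Bigl\|\sum_{j\ne i}\lambda_ja_j+(\lambda_i-\lambda_i)a_i\Bigr\|=\Bigl\|\sum_{j\neq i}\lambda_j a_j\Bigr\| = \|a_i\|.
\]
In the case $k=2$ this reads $\|a_2\|=\|a_1\|$ with $a_2=-a_1$, which is again no contradiction. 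So the main obstacle is genuinely to show $a_i=0$.

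The plan at this step is to use the real-linear part $S$ from Mazur--Ulam, $T=T(0)+S$. From $T(a_i)=\lambda_ia_i$ we get $S(a_i)=\lambda_ia_i-\sum_j\lambda_ja_j$. Summing over $i$ and using $\sum_i a_i=0$ together with the real additivity of $S$ gives
\[
0=S\Bigl(\sum_i a_i\Bigr)=\sum_i\lambda_ia_i - k\sum_j\lambda_ja_j=(1-k)\,T(0).
\]
Since $k\geq 2$, this yields $T(0)=0$, which is the first claim. Real linearity then follows because $T=T(0)+S=S$.
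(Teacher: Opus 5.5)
Your final paragraph is correct and is the paper's argument: the identity $T(0)+S(P_i(0))=\lambda_iP_i(0)$ you use is the paper's operator identity $T(0)+SP_i=\lambda_iP_i$ evaluated at $0$, and summing over $i$ with the real additivity of $S$ and $\sum_iP_i=I$ gives $(k-1)T(0)=0$ in both. You should delete the abandoned exploratory attempts and the opening announcement that you will show $P_i(0)=0$, since that claim is never proved and is not needed.
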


\begin{proof}
Let $T = \sum_{i=1}^k \lambda_i P_i$, where $\lambda_i$ are distinct modulus one complex numbers. By Mazur-Ulam theorem, $T = T(0) + S$, where $S$ is a real linear surjective isometry. By Lemma \ref{TP=lambdaP}, we have $TP_i = \lambda_i P_i$ for $i = 1, 2, \ldots, k$. It follows that $T(0) +  SP_i = \lambda_i P_i$ for $i = 1, 2, \ldots, k$. Adding these $k$ equations, we obtain $k T(0) +  \sum_{i=1}^k S P_i =  \sum_{i=1}^k \lambda_i P_i$. Since $S$ is real linear and the collection $\mathcal{C}$ defines a partition of the identity, we conclude that $kT(0) + S = T(0) + S$. Hence, $T(0) = 0$. This also shows that $T$ is real linear.
\end{proof}

\begin{proof}[Proof of Theorem \ref{P_icont}]
Let $\mathcal{C} = \{ P_1, P_2, \ldots, P_k\}$, $k \geq 2$, be a family of $k$-circular idempotents on $X$.	Then for each $i = 1, 2, \ldots, k-1$ and $\lambda_1^i, \lambda_2^i, \ldots, \lambda_k^i \in \mathbb{T}$, 
$$
T_i = \lambda_1^i P_1 + \lambda_2^i P_2 + \cdots + \lambda_k^i P_k
$$
is a surjective isometry on $X$. Moreover, $I = P_1 + P_2 + \cdots +P_k$. These equations can be described by the matrix equation $M \widetilde{\bf{P}} = \widetilde{\bf{T}}$, where
$$
M = 
\begin{bmatrix}
1 & 1 & \ldots & 1 \\
\lambda_1 & \lambda_2 & \ldots & \lambda_k \\
\vdots &  \vdots & \vdots & \vdots \\
\lambda_1^{k-1} & \lambda_2^{k-1} & \ldots & \lambda_k^{k-1} 
\end{bmatrix},\
\widetilde{\bf{P}} = 
\begin{bmatrix}
P_1 \\
P_2 \\
\vdots \\
P_k
\end{bmatrix} 
\text{ and }
\widetilde{\bf{T}} 
= \begin{bmatrix}
I \\
T_1 \\
\vdots \\
T_{k-1}
\end{bmatrix}.
$$
We observe that $M$ is a Vandermonde matrix of order $k$ and 
$$
\text{det } M = \displaystyle \prod_{i > j} (\lambda_i - \lambda_j) \neq 0.
$$ 
This implies that $\widetilde{\bf{P}} = M^{-1} \widetilde{\bf{T}}$. Since each isometry $T_i$ is continuous, we conclude that each $P_i$ is continuous. 
\end{proof}

The following corollary is immediate from Theorem \ref{P_icont} and Proposition \ref{T0=0}. 

\begin{cor} \label{Real-kCI}
Let $\mathcal{C} = \{ P_1, P_2, \ldots, P_k\}$, $k \geq 2$, be a family of $k$-circular idempotents on a normed space $X$. Then each $ P_i$ is real linear, $i = 1,2, \ldots, k$.
\end{cor}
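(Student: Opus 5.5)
The argument mirrors the proof of Theorem \ref{P_icont}, but now we track real linearity instead of continuity. Fix distinct $\lambda_1, \ldots, \lambda_k \in \mathbb{T}$. Since $\mathcal{C}$ is a family of $k$-circular idempotents, for each $j = 1, \ldots, k-1$ the map
\[
T_j = \sum_{i=1}^k \lambda_i^j P_i
\]
is a surjective isometry. The powers $\lambda_i^j$ all lie in $\mathbb{T}$, but for $j \ge 1$ they need not be distinct (for instance, $\lambda_1 = 1$ and $\lambda_2 = -1$ give $\lambda_1^2 = \lambda_2^2$). To handle this, I will first choose the $\lambda_i$ so that all the powers $\lambda_i^j$, $1 \le j \le k-1$, are distinct for distinct $i$. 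One way is to take $\lambda_i = e^{i\theta_i}$ with $\theta_1, \ldots, \theta_k$ rationally independent, or simply small distinct positive angles with $(k-1)\theta_k < \pi$.

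With this choice, each $T_j$ is a surjective isometry of the form required in Definition \ref{kCI}(2). Hence $\mathcal{C}$ is a family of generalized $k$-circular idempotents corresponding to $T_j$, and Proposition \ref{T0=0} applies: $T_j(0) = 0$ and $T_j$ is real linear. This is the step I expect to need care, namely verifying that the hypotheses of Proposition \ref{T0=0} genuinely hold for each $T_j$. That requires exactly the distinctness of the coefficients $\lambda_i^j$ secured above, together with the orthogonality and partition-of-identity properties, which are built into the definition of $k$CI.

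Next, exactly as in the proof of Theorem \ref{P_icont}, write $M\widetilde{\mathbf{P}} = \widetilde{\mathbf{T}}$, where $M$ is the Vandermonde matrix in $\lambda_1, \ldots, \lambda_k$. Since the $\lambda_i$ are distinct, $M$ is invertible, and so
\[
P_i = \sum_{j=0}^{k-1} c_{ij}\, T_j
\]
for complex scalars $c_{ij}$, with $T_0 = I$.

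Finally, each $T_j$ is real linear. Multiplying a real linear map by a complex scalar $c$ gives a real linear map, since $c\,T(ax + by) = a\,cT(x) + b\,cT(y)$ for real $a, b$. Finite sums of real linear maps are real linear. Therefore each $P_i$ is real linear, which completes the proof of the corollary.
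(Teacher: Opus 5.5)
Your proposal is correct and is essentially the paper's argument: the paper gets the corollary by combining the Vandermonde inversion $\widetilde{\mathbf{P}}=M^{-1}\widetilde{\mathbf{T}}$ from the proof of Theorem~\ref{P_icont} with the real linearity of each $T_j$ from Proposition~\ref{T0=0}, exactly as you do. One small slip: rational independence of $\theta_1,\dots,\theta_k$ alone does not make the powers $\lambda_i^j$ distinct (for example $\theta_1=1$, $\theta_2=1+\pi$ gives $\lambda_1^2=\lambda_2^2$), but your alternative choice of small angles with $(k-1)\theta_k<\pi$ does work, and in any case the proof of Proposition~\ref{T0=0} never actually uses distinctness of the coefficients.
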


\section{Considerations on Metric Projections}

In this section, we consider an important class of idempotent maps called metric projections and see when it is bi-potent.

\begin{Def}
Let $K$ be a non-empty subset of a normed linear space $X$ and let $x \in X$. The metric projection of $X$ onto $K$ is the set-valued map defined by
$$
P_K(x) = \{y \in K: ||x - y|| = dist (x, K)\}, 
$$
where $dist (x, K) = \inf_{y \in K} ||x - y||$. The set $K$ is called proximinal (resp. Chebyshev) if $P_K(x)$ contains at least (resp. exactly) one point for every $x \in X$.  
\end{Def}

It is known that proximinal sets are closed. If $X$ is a uniformly convex Banach space, then any closed and convex subset of $X$ is Chebyshev. If $K$ is a Chebyshev set in $X$, then $P_K$ is single-valued with range $K$. Moreover, if $x \in K$, then $P_K(x) = x$. Thus, $P_K$ is idempotent. For more on this topic, we refer the reader to the book by Deutsch \cite{D}.

Now, we present some necessary conditions for the bi-potency of metric projections. 

\begin{thm}
Let $K$ be a Chebyshev set in a normed space $X$, and let $P_K$ be a bi-potent map. Then the following statements are true.
\begin{enumerate}
\item $0 \in K$.

\item For every $x \in K^c$, $x - P_K(x) \in K^c$, where $K^c$ denotes the complement of $K$.

\item $0 \notin int(K)$.

\item $0 \in Bdr(K)$, the boundary of $K$.

\item If $K$ is complete and convex, then it is not totally bounded.
\end{enumerate}
\end{thm}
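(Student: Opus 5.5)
The plan rests on the bi-potency criterion from \cite[Lemma 1.3]{HAF} recalled above. Since $P_K$ is idempotent, bi-potency means $P_K(I-P_K)=0$, that is,
\begin{equation*}
P_K\bigl(x-P_K(x)\bigr)=0 \quad \text{for all } x\in X .
\end{equation*}
Every item will be read off from this identity, using only two facts: $P_K$ has range $K$, and $P_K(y)=y$ for $y\in K$.

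For (1), the identity shows that $0$ is a value of $P_K$, hence $0\in K$.

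For (2), take $x\notin K$ and suppose $y=x-P_K(x)\in K$. Then $P_K(y)=y$, while the identity gives $P_K(y)=0$. So $y=0$, i.e. $x=P_K(x)\in K$, a contradiction.

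For (3), I would note that the statement implicitly needs $K\neq X$. Indeed, for $K=X$ we have $P_K=I$, which is bi-potent, and $0\in int(K)$; I would state this hypothesis explicitly. Now suppose $B(0,r)\subset K$ and pick $x\notin K$. Put $z=x-P_K(x)$. Then $z\neq 0$, since $K$ is closed (being proximinal) and so $\|z\|=dist(x,K)>0$. The identity says $0$ is the nearest point of $K$ to $z$, so $dist(z,K)=\|z\|$. But for small $t\in(0,1)$ the point $tz$ lies in $B(0,r)\subset K$, and $\|z-tz\|=(1-t)\|z\|<\|z\|$, which is a contradiction.

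For (4), combine (1) and (3): $0\in K$, which is closed, and $0$ is not an interior point, so $0\in Bdr(K)$.

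For (5), a complete, totally bounded set is compact, so I argue by contradiction assuming $K$ is compact. I must also exclude the trivial case $K=\{0\}$, which is compact and has $P_K=0$ bi-potent; this hypothesis should be made explicit. By compactness pick $k_0\in K$ of maximal norm, so $k_0\neq 0$, and take $x=tk_0$ with $t>2$. Let $p=P_K(x)$. Then
\begin{equation*}
\|tk_0-p\|\le\|tk_0-k_0\|=(t-1)\|k_0\|,
\end{equation*}
and also
\begin{equation*}
\|tk_0-p\|\ge t\|k_0\|-\|p\|\ge (t-1)\|k_0\|.
\end{equation*}
Hence $k_0$ is also a nearest point to $x$, and uniqueness (Chebyshev) forces $p=k_0$.

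The identity then gives $P_K((t-1)k_0)=0$, so $dist((t-1)k_0,K)=(t-1)\|k_0\|$. However, $k_0\in K$ and $\|(t-1)k_0-k_0\|=(t-2)\|k_0\|<(t-1)\|k_0\|$, a contradiction. This argument does not even use convexity.

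The main obstacle is (5): choosing a test point whose projection can be identified explicitly. The maximal-norm point $k_0$, together with the two-sided norm estimate above, is what makes this work.
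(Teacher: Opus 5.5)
Your proof is correct. For (1), (2) and (4) it follows the paper's argument; for (3) and (5) your route differs.

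In (3), the paper chooses a boundary point $y$ and some $x\in B_\epsilon(y)\setminus K$. Then $\|x-P_K(x)\|\le\|x-y\|<\epsilon$ puts $x-P_K(x)$ in $B_\epsilon(0)\subset K$, which contradicts (2). You instead take any $x\notin K$ and note that $P_K(z)=0$ with $z=x-P_K(x)\neq 0$ is impossible, because $tz\in K$ is strictly closer to $z$. This is the general fact that the nearest point to an exterior point cannot be interior. Both arguments are equally short.

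The real gain is in (5). Both proofs identify the nearest point to a dilate of the maximal-norm point by the same norm estimate. The paper then needs convexity to place $\epsilon z\in K$ for $0<\epsilon<1$ before invoking (2). Your argument reaches the contradiction from $P_K((t-1)k_0)=0$ and never uses convexity. In fact $t=2$ already suffices: $P_K(2k_0)=k_0$ and $2k_0-k_0=k_0\in K$ while $2k_0\notin K$, which contradicts (2) directly. So you prove the stronger statement that no compact Chebyshev set $K\neq\{0\}$ has a bi-potent metric projection.

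Your caveats are also right. For $K=X$ we get $P_K=I$, which is bi-potent, yet $0\in int(K)$ and $Bdr(K)=\emptyset$, so (3) and (4) fail. For $K=\{0\}$ we get $P_K=0$, which is bi-potent, so (5) fails. The paper's proofs use both exclusions silently: choosing $x\in B_\epsilon(y)\setminus K$ requires $K\neq X$, and the claim $(1+\epsilon)z\in K^c$ requires $z\neq 0$.
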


\begin{proof}
\begin{enumerate}
\item Since $P_K(I - P_K) = 0$ and the range $P_K$ is $K$.

\item Let $x \in K^c$ such that $x - P_K(x) \in K$. Then $P_K (I-P_K)(x)= x - P_K(x) \neq 0$. Thus, $I-P_K$ is not idempotent, a contradiction.

\item Suppose $0 \in int(K)$, then there exists $\epsilon > 0$ such that $B_\epsilon (0)\subset K$. Let $y \in Bdr(K)$ and $x \in B_\epsilon (y) \setminus K$. Then $\|x-P_K (x)\| \leq ||x - y|| < \epsilon$. It follows that $x - P_K(x) \in K$, a contradiction.

\item This is immediate, since $K$ is closed and $0 \notin int(K)$. 

\item On the contrary, we assume that $K$ is totally bounded. Select $z\in K$ such that $\|z\|=  \max \{\|w\|: w \in K\}$. It is clear that $z \in Bdr(K)$. Moreover, $0 \in Bdr(K)$. Since $K$ is convex, the line segment joining $0$ and $z$ is contained in $K$. Consider a positive $\epsilon < 1$. Then $\epsilon z \in K$ and $(1+\epsilon) z \in K^c$. We claim that $P_K((1+\epsilon) z) = z$. If not, let $z_1=P_K((1+\epsilon) z)$ with $\|z_1-(1+\epsilon) z\|<\epsilon\|z\|$. Then
\begin{align*}
\|z_1\| = \|z_1-(1+\epsilon) z+(1+\epsilon) z\| & \geq (1+\epsilon) \|z\|-\|z_1-(1+\epsilon) z\| \\ 
& > (1+\epsilon) \|z\|-\epsilon\|z\|=\|z\|.
\end{align*}
This is not possible due to the choice of $z$. Therefore, $P_K((1+\epsilon)z) = z$ and $(1+\epsilon)z - P_K((1+\epsilon)z) = (1+\epsilon)z - z = \epsilon z \in K$, a contradiction due to the assertion $(2)$. Hence, $K$ is not totally bounded.
\end{enumerate}
\end{proof}

\textbf{Conflict of Interest:} The authors declare that they have no conflict of interest.

\end{document}